\newtheorem{theorem}{Theorem}[section]
\newtheorem{corollary}[theorem]{Corollary}
\theoremstyle{definition}
\newtheorem{definition}[theorem]{Definition}
\DeclareMathOperator{\corank}{corank}
\DeclareMathOperator{\Hom}{Hom}
\def\max{\mathrm{max}} 
\newcommand{\kk}{\mathsf k}
\newcommand{\ello}{l}
\newcommand{\kt}{\kk\llbracket t\rrbracket}
\title{Identifying partitions with maximum commuting orbit $Q=(u,u-r)$}
\author{Mats Boij}
\address{Department of Mathematics, KTH Royal Institute of Technology, SE-100 44 Stockholm, Sweden}\email{boij@kth.se}
\author{Anthony Iarrobino}
\address{Department of Mathematics, Northeastern University, Boston, MA 02115, USA}
\email{a.iarrobino@neu.edu}
\author{Leila Khatami}
\address{Department of Mathematics, Union College, Schenectady, NY 12309, USA}
\email{khatamil@union.edu}
\date{\today}
\keywords{Jordan type, commuting nilpotent matrices, parameter space, stratification, partitions}
\subjclass[2020]{Primary: 15A27; Secondary: 05A17, 13E10, 14A05, 15A20}
\begin{document}
\begin{abstract}
The authors here show that the partition $P_{k,\ello}(Q)$ in the table $\mathcal 
T(Q)$ of partitions having maximal nilpotent commutator a given stable partition $Q$, defined in \cite{IKVZ2}, is identical to the analogous partition $P_{k,\ello}^Q$ defined by the authors in \cite{BIK} using the Burge correspondence.

\end{abstract}\maketitle
\section{Introduction}
The Jordan type $P_B$ of a nilpotent $n\times n$ nilpotent matrix $B$ is the partition giving the Jordan block decomposition of a Jordan block matrix $J_P$ similar to $B$. Since the variety $\mathcal N_B$ of nilpotent matrices commuting with $B$ is irreducible, there is a generic commuting Jordan type $\mathcal D(P)$ of a matrix $A$ in $\mathcal N_B$, which is maximum in the 
dominance order.  The partition $\mathcal D(P)$ is stable, that is, $\mathcal D(\mathcal D(P))=\mathcal D(P)$; equivalently, $\mathcal D(P)$ has parts that differ pairwise by at least two - known in combinatorics as super-distinct. In addition the largest and smallest parts of $\mathcal D(P)$ were determined by P. Oblak and by the third author in\cite{Ob,Kh1}: so in 2014 $
\mathcal D(P)$ was known when $\mathcal D(P)$ has three or less parts.
 R. Basili in 2022 determined $\mathcal D(P)$ in general, proving the 2012 recursive conjecture of P.~Oblak, that had been widely studied with many partial results. The second two authors with B. Van Steirteghem and R.~ Zhao showed that
the partitions in $\mathcal D^{-1}(Q)$ for a stable $Q=(u,u-r)$ form a $(r-1)\times (u-r)$ box, such that $P_{k,\ello}(Q)$ has $(k+\ello)$ parts. Their arrangement used what they termed $A$ rows, and $B,C$ hooks \cite{IKVZ2}.
The box theorem of 
J. Irving, T. Ko\v{s}ir and M. Mastnak \cite{IKM} shows that for each stable partition $Q$ the Jordan types $P$ satisfying $\mathcal D(P)=Q$ form a box, whose entries could be determined by their Burge codes. For an account of these developments, see the third author's survey \cite{Kh2}.  \par
In \cite{BIK} the authors used the Irving, Ko\v{s}ir, Mastnak box theorem in the special case that $Q$ has two parts, to determine the equations $E^Q_{k,\ello}$ defining the locus $P_B=P^Q_{k,\ello}$ in $\mathcal N(Q)$. There, $P^Q_{k,\ello}$ in $\mathcal D^{-1}(Q)$ is determined by its Burge code.
We here prove that the partition $P_{k,\ello}(Q)$ in the table $\mathcal T(Q)$ from  \cite{IKVZ2} is identical to 
the partition $P^Q_{k,\ello}$ in ${\mathcal D}^{-1}(Q)$ as defined in \cite{BIK}: this is \cite[Proposition 3.9]{BIK}, whose proof we deferred to here. We show this in Theorem~\ref{equTOpar} and Corollary \ref{identifycor} below.   We prove this by showing that the equations $E^Q_{k,\ello}$ defining each locus are the same. In order for this note to be self-contained, there is overlap between Section \ref{sec3} here and the Notations and Background Section 2 of \cite{BIK}.
\section{The Table $
\mathcal T(Q)$ from \cite{IKVZ2}}\label{sec2}
We need the definition of types A,B, and C partitions, elements of the table $\mathcal T(Q), Q=(u,u-r)$ giving $\mathcal D^{-1}(Q),$ from \cite{IKVZ2}.
Recall that a partition is \emph{almost rectangular} if its largest and smallest part differ by at most one.
The stable partition $\mathcal D(P)$ has two parts exactly when $P$ is the union of two almost rectangular partitions, but $P$ is not itself almost rectangular. Hence there exist $a,b\in \mathbb N$ with $a-b\ge 2$ such that
\begin{equation}\label{Pabeq}P=\left(a^{n_a},(a-1)^{n_{a-1}}, b^{n_b}, (b-1)^{n_{b-1}}\right) \text {with $n_a>0$ and $n_b>0$}.
\end{equation} 
We will assume that $n_{b-1}=0$ if $b=1$. We denote here by $n_i$ the number of parts of $P$ having length $i$.
\begin{definition}[Type A,B,C partitions in $\mathcal D^{-1}(Q)$]\label{casesdef}
Let $Q=(u,u-r)$ with $u,r\in \mathbb N, u>r\ge 2$ and let $P\in \mathcal D^{-1}(Q)$ satisfy \eqref{Pabeq}.\par
We say that $P$ is of \emph{type A} if $u=a\cdot n_a+(a-1)n_{a-1}$;\par
We say that $P$ is of \emph{ type B} if $u=2n_{a}+2n_{a-1}+bn_b+(b-1)n_{b-1}$, or if $b=a-2, n_{b-1}=0 $ and $u=2n_a+(a-1)n_{a-1}+bn_b$. \par
We say that $P$ is of \emph{type C} if $b=a-2$, if each of $n_a,n_{a-1},n_b,n_{b-1}$ is non-zero,  and $u=2n_{a}+(a-1)n_{a-1}+bn_b$. 
\end{definition}
We now state part I of the Table Theorem from \cite{IKVZ2}. In it, for compactness, we will use $P_{k,\ello}$ to denote the partition $P_{k,\ello}(Q)$ in the table $\mathcal T(Q)$.
\begin{theorem}[Table Theorem, part I]\label{tablethm} Let $Q=(u, u-r)$ with $u>r\geq 2$. 
\begin{itemize}

\item[(a)] For every positive integer $t$ such that $1\le t\le \min\{u-r,\lfloor \frac{r-1}{2}\rfloor\}=t_{\max}$,  define the set 
\begin{equation*}
A_t=\{(k,\ello )\in \mathbb{N}\times\mathbb{N}\,|\, k_{t-1} \le k<k_{t}\mbox { and }t\le \ello\le u-r\}.
\end{equation*}

For $T=\min\{u-r,\lfloor \frac{r-1}{2}\rfloor\}+1=t_{\max}+1$ define the set 
\begin{equation*}
A_T=\{(k,\ello)\in {\mathbb{N}\times\mathbb{N}}\,
{\mbox{ such that }}\, k=k_{t_\max}=r-T{\text { and }}T\le \ello \leq u-r\}.
\end{equation*}
Moreover 
\begin{equation}\label{ATnonempeq}
A_T\not=\emptyset \Leftrightarrow r \text{ is even and }u\ge \frac{3r}{2},
\end{equation}
and in that case $T={k_{t_\max}}={\frac{r}{2}}$.

 Then for all $(k, \ello)\in A_t$, when $t\in \{1,2,\ldots ,T\}$ the partition $P_{k,\ello}=\left([u]^{k},[u-r]^{\ello}\right)$ is of type A and
satisfies $\mathcal D(P_{k, \ello})=(u,u-r)$.

\item[(b)] For every positive integer $t$ such that $1\le t\le\min\{u-r,\lfloor \frac{r-1}{2}\rfloor\}$, define the subset $C_t\subset  \mathbb{N}\times\mathbb{N}$ as 
$$C_t=\{(k,t)\,|\, k_t\leq k < c_t\}.$$ 
 Then for all $(k, \ello)\in C_t$, the partition $$P_{k,\ello}=\left([u-r+2t]^{t},[u-2t-d_t(q_t-1)]^{k-d_t}, (q_t-1)^{d_t})\right)$$
 is of type C but not of type A or B, and  satisfies $\mathcal D(P_{k, \ello})=(u,u-r)$.

\item[(c)] For every positive integer $t$ such that $1\le t\le\min\{u-r,\lfloor \frac{r-1}{2}\rfloor\}$, define the subset $B_t\subset \mathbb{N}\times\mathbb{N}$ as $$B_t=\{(k,t)\,|\, c_t\leq k \leq r-t\} \cup \{(r-t,\ello)\,|\,t< \ello \leq u-r\}.$$ 
Then for all $(k, \ello)\in B_t$, the partition $$P_{k,\ello}=\left([u-r+2t]^{t},[u-2t]^{k+\ello-t}\right)$$
 is of type B but not of type A and satisfies $\mathcal D(P_{k, \ello})=(u,u-r)$.

\item[(d)] Each pair $(k,\ello)\in \mathbb N\times \mathbb N$ with $1\le k\le r-1$ and $ 1\le \ello\le u-r$ belongs to one and only one set $A_t$, $B_t$ or $C_t$ defined above. In particular there are listed above $(r-1)(u-r)$ distinct partitions  $\{P_{k,\ello}\}$, each satisfying $\mathcal D(P_{k, \ello})=(u,u-r)$. Moreover, every partition $P_{k,\ello}$ has $k+\ello$ parts.

\end{itemize}

\end{theorem}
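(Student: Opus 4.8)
The plan is to view Theorem~\ref{tablethm}, which is the Table Theorem of \cite{IKVZ2}, as the conjunction of three essentially independent assertions and to prove them in turn: \emph{(i)} each partition $P_{k,\ello}$ displayed in (a), (b), (c) satisfies $\mathcal D(P_{k,\ello})=(u,u-r)$ and has exactly the type claimed (and not a ``lower'' one); \emph{(ii)} the index sets $A_t$, $B_t$, $C_t$, $A_T$ are pairwise disjoint and exhaust $\{1,\dots,r-1\}\times\{1,\dots,u-r\}$---this is (d) together with \eqref{ATnonempeq}; and \emph{(iii)} the $P_{k,\ello}$ are pairwise distinct, each with $k+\ello$ parts.

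For \emph{(i)} I would begin with the observation that every partition in (a), (b), (c) is a union of two almost rectangular partitions and is not itself almost rectangular: the ``$A$ row'' $[u-r+2t]^t$ (or $[u]^k$ in (a)) is one piece, and the union of the remaining parts is the other---in the type C display this uses that $d_t$ and $q_t$ are chosen so that the parts of $[u-2t-d_t(q_t-1)]^{k-d_t}$ lie in $\{q_t-1,q_t\}$, making its union with $(q_t-1)^{d_t}$ almost rectangular---while $P_{k,\ello}$ is not almost rectangular since its extreme parts differ by at least two once $r\ge2$. By the characterization recalled just before Definition~\ref{casesdef}, $\mathcal D(P_{k,\ello})$ then has exactly two parts. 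Since $|P_{k,\ello}|=2u-r$ by an immediate count in each case, and since the larger part of $\mathcal D(P)$ for such a $P$ is pinned down by the formula of Oblak and Khatami \cite{Ob,Kh1}, it suffices to compute that this larger part equals $u$; the smaller part is then forced to be $2u-r-u=u-r$, and $r\ge2$ makes $(u,u-r)$ super-distinct as required. Computing the larger part amounts to exhibiting the almost rectangular decomposition of $P_{k,\ello}$ realizing the value of $u$ prescribed in Definition~\ref{casesdef} and checking that no competing decomposition exceeds it: this is immediate in type A from $u=a\,n_a+(a-1)n_{a-1}$, and in types B and C requires showing that the split producing the coefficient-$2$ expression $u=2n_a+2n_{a-1}+bn_b+(b-1)n_{b-1}$ (or its $b=a-2$ variant) dominates. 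The type assignment then follows from the same defining equations: type C needs $b=a-2$ and all of $n_a,n_{a-1},n_b,n_{b-1}$ nonzero, excluding it from types A and B, and likewise B from A.

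Step \emph{(ii)} I would carry out as arithmetic with the auxiliary integers $k_t,c_t,d_t,q_t$ of \cite{IKVZ2}: verify $k_0=1$, that $t\mapsto k_t$ is strictly increasing on $0\le t\le t_{\max}$, and $k_t\le c_t\le r-t$; then for a fixed column $\ello$ the intervals $[k_{t-1},k_t)$ from the $A_t$ with $t\le\ello$, the interval $[k_{\ello},c_{\ello})$ of $C_{\ello}$ and $[c_{\ello},r-\ello]$ of $B_{\ello}$ when $\ello\le t_{\max}$, the points $(r-t,\ello)$ from the horizontal parts of the $B_t$ with $t<\ello$, and the row of $A_T$, together partition $\{1,\dots,r-1\}$; disjointness is visible from the same intervals. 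For \eqref{ATnonempeq} one observes that $A_T\ne\emptyset$ requires $T\le u-r$ and the equality $k_{t_{\max}}=r-T$ in its definition; the latter, given the value of $k_{t_{\max}}$ from \cite{IKVZ2}, holds precisely when $r$ is even---then $T=k_{t_{\max}}=r/2$---and $T\le u-r$ then reads $u\ge3r/2$; conversely these make $A_T$ nonempty. A tiling of the $(r-1)\times(u-r)$ rectangle contains exactly $(r-1)(u-r)$ lattice points, giving the count.

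For \emph{(iii)} distinctness can be read off the shapes: type A partitions use only the sizes $u$ and $u-r$; type B partitions use exactly $u-r+2t$ and $u-2t$; type C partitions have three distinct part sizes, one being $q_t-1$. Hence the three types are mutually exclusive among the listed partitions; within a type the largest part $u-r+2t$ recovers $t$ and then the multiplicities recover the other index, so no two $P_{k,\ello}$ coincide. The part count is immediate from each display: $k$ parts of size $u$ and $\ello$ of size $u-r$ in (a); $t+(k-d_t)+d_t=k+t=k+\ello$ in (b), where $\ello=t$; and $t+(k+\ello-t)=k+\ello$ in (c). The step I expect to be the main obstacle is \emph{(i)} for types B and C: proving that the prescribed almost rectangular decomposition achieves the maximum defining the largest part of $\mathcal D(P_{k,\ello})$, equivalently that $\mathcal D(P_{k,\ello})$ is not strictly above $(u,u-r)$ in the dominance order. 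I would attack this either through Basili's theorem (the proof of the Oblak recursion) or, keeping to the two-part setting, through the explicit bound of \cite{Kh1} for the top part of $\mathcal D(P)$, with particular care at the boundary value $k=c_t$ where the maximizing split changes from a $C$-hook to a $B$-hook.
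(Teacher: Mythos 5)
The paper does not prove this statement. Theorem~\ref{tablethm} is quoted directly from \cite{IKVZ2} as background (the preceding sentence reads ``We now state part I of the Table Theorem from \cite{IKVZ2}''), and none of the auxiliary quantities $k_t,c_t,d_t,q_t$ appearing in it are even defined here. So there is no in-paper proof against which to compare your attempt; the argument you are reconstructing lives in \cite{IKVZ2}. Your three-step scaffolding---(i) verify $\mathcal D(P_{k,\ello})=(u,u-r)$ and the type assignment via the Oblak/Khatami description of the extreme parts of $\mathcal D(P)$, (ii) show that the $A_t,B_t,C_t,A_T$ tile the $(r-1)\times(u-r)$ box and derive \eqref{ATnonempeq}, (iii) check distinctness and the part count---is a plausible outline of how such a proof would go, and you correctly identify the Oblak-maximum verification for types B and C as where the real work lies.

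There are, however, two concrete problems. First, step (iii) misreads the notation: $[m]^n$ denotes the \emph{almost rectangular partition of $m$ into $n$ parts} (parts of size $\lfloor m/n\rfloor$ or $\lceil m/n\rceil$), not $n$ copies of $m$. So a type A partition $([u]^k,[u-r]^\ello)$ does not ``use only the sizes $u$ and $u-r$,'' and ``the largest part $u-r+2t$'' of a type B or C partition is in fact the \emph{sum} of its short $U$-chain, not a part; a quick sanity check is that the total must be $2u-r$ for every $(k,\ello)$, which fails under your reading. Distinctness is still recoverable---for instance from the number of parts $k+\ello$, the two block sums $u-r+2t$ and $u-2t$, and the number of parts $t$ in the small block---but the argument as written does not establish it and would need to be redone in those terms, with the boundary values $k=k_t$ and $k=c_t$ handled separately. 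Second, without the definitions of $k_t,c_t,d_t,q_t$ (which the paper does not reproduce), neither the exhibited type C shape nor the interval endpoints in step (ii) can actually be checked, so the proposal remains a strategy sketch rather than a proof; in particular the claim that the prescribed decomposition attains the Oblak maximum, and that the type assignment is exactly C (not A or B) on $C_t$, is deferred to an unspecified appeal to \cite{Ob,Kh1,Ba2}.
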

Part II of the Table Theorem is the completeness of the table \cite[Theorem 3.19]{IKVZ2}.
\begin{theorem}
Let $Q= (u,u-r), u>r \ge 2$. The table $\mathcal T(Q)$ comprising the partitions $P_{k,\ello}(Q)$ of Theorem \ref{tablethm} contains all the partitions in $\mathcal D^{-1}(Q)$.
\end{theorem}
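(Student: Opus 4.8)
The plan is to prove the reverse inclusion $\mathcal D^{-1}(Q)\subseteq \mathcal T(Q)$; together with parts (a)--(c) of Theorem~\ref{tablethm}, which give $\mathcal T(Q)\subseteq\mathcal D^{-1}(Q)$, this yields the statement. Since Theorem~\ref{tablethm}(d) already exhibits $(r-1)(u-r)$ distinct partitions inside $\mathcal T(Q)$, it in fact suffices to do either of the following: (i) for each $P\in\mathcal D^{-1}(Q)$ produce a pair $(k,\ello)$ in the box $1\le k\le r-1$, $1\le\ello\le u-r$ with $P=P_{k,\ello}(Q)$; or (ii) prove the a priori bound $|\mathcal D^{-1}(Q)|\le (r-1)(u-r)$ and conclude by counting.

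The first step, common to both routes, is to pin down the shape of $P$. I would invoke the structural fact recalled just before Definition~\ref{casesdef}: for a stable two-part $Q$, a partition $P$ satisfies $\mathcal D(P)=Q$ only if $\mathcal D(P)$ has two parts, which happens exactly when $P$ is a union of two almost rectangular partitions without itself being almost rectangular. Hence every $P\in\mathcal D^{-1}(Q)$ has the form \eqref{Pabeq} for some integers $a>b+1$ and multiplicities $n_a,n_{a-1},n_b,n_{b-1}$ with $n_a,n_b>0$ (and $n_{b-1}=0$ if $b=1$). This reduces the task to a finite combinatorial classification.

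For route (i) I would then use the known formula for the largest part of $\mathcal D(P)$ when $P$ is a union of two almost rectangular partitions---equivalently, Oblak's recursion specialized to this case---which expresses that largest part as the maximum of a short list of ``merge sums'' in the $n_i$; the relevant candidates are exactly the quantities $a\cdot n_a+(a-1)n_{a-1}$, $2n_a+2n_{a-1}+bn_b+(b-1)n_{b-1}$, and $2n_a+(a-1)n_{a-1}+bn_b$ appearing on the right-hand sides of the defining equalities of Definition~\ref{casesdef}. Imposing that this maximum equal $u$ and that the complementary part equal $u-r$ forces $P$ into exactly one of the types A, B, C; the thresholds $c_t,k_t,d_t,q_t$ built into the ranges defining $A_t,B_t,C_t$ then record which type occurs and for which $t$. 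Finally, within each type one inverts the explicit formula of Theorem~\ref{tablethm}(a), (b) or (c): solving for the multiplicities of $P$ in terms of $(k,\ello)$ gives a unique candidate pair, and one checks it lands in the prescribed range, so that $P=P_{k,\ello}(Q)$.

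The main obstacle is the middle step of route (i): verifying that those three merge-sum expressions genuinely exhaust the possible values of the largest part of $\mathcal D(P)$, so that the constraint $\mathcal D(P)=(u,u-r)$ leaves no partition outside types A, B, C, and that the induced partition of the box $\{\,1\le k\le r-1,\ 1\le\ello\le u-r\,\}$ into the sets $A_t,B_t,C_t$ has neither gaps nor overlaps. This is a statement about $\mathcal D$ on unions of two almost rectangular partitions and is the combinatorial heart of \cite{IKVZ2}. Route (ii) sidesteps it, at the cost of being less self-contained, by invoking the Irving--Ko\v{s}ir--Mastnak box theorem \cite{IKM}: for a two-part stable $Q=(u,u-r)$ the Burge-code box parametrizing $\mathcal D^{-1}(Q)$ has dimensions $(r-1)\times(u-r)$, hence $|\mathcal D^{-1}(Q)|=(r-1)(u-r)$, and the count in Theorem~\ref{tablethm}(d) then forces $\mathcal T(Q)=\mathcal D^{-1}(Q)$.
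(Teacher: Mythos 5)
The paper does not give a proof of this statement; it simply cites \cite[Theorem 3.19]{IKVZ2}, so there is no ``paper's own proof'' to compare against here. With that caveat, your sketch is sound, and your route (ii) is the argument that fits most naturally into this paper's framework: the Irving--Ko\v{s}ir--Mastnak box theorem says $\mathcal D^{-1}(Q)$ is enumerated by the Burge codes $\alpha^{u-r-\ello}\beta^{\ello}\alpha^{r-k}\beta^k\alpha$ with $1\le k\le r-1$, $1\le\ello\le u-r$ (this is precisely Equation~\eqref{Burgespecialeq} together with the constraints that make the word a valid code), giving $|\mathcal D^{-1}(Q)|=(r-1)(u-r)$, and Theorem~\ref{tablethm}(d) exhibits exactly $(r-1)(u-r)$ distinct partitions in $\mathcal T(Q)\subseteq\mathcal D^{-1}(Q)$, forcing equality. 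Your route (i) is closer to how \cite{IKVZ2} must have argued (that paper predates \cite{IKM}, so it could not have used the general box theorem), and you have correctly identified that the hard content there is verifying that the three merge-sum expressions exhaust the possible largest parts of $\mathcal D(P)$ and that the induced decomposition of the $(r-1)\times(u-r)$ box into $A_t,B_t,C_t$ is a genuine partition; that is indeed where the work lies in \cite{IKVZ2}, and your sketch rightly does not claim to supply it. One small point worth flagging: when you invoke the box dimensions $(r-1)\times(u-r)$ in route (ii), be sure to attribute them to the Burge-code parametrization of \cite{IKM} (or to the explicit code \eqref{Burgespecialeq}) rather than to the box description in \cite{IKVZ2}, since the latter is what you are trying to reprove and citing it would be circular.
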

\section{The table of partitions $P^Q_{k,\ello}$ of \cite{BIK}}\label{sec3}
We defined the partitions $P^Q_{k,\ello}$ of the table $\mathcal D^{-1}(Q)$ in \cite{BIK} using the Burge code, following \cite{IKM}. 

When $Q = (u,u-r)$ where $u>r\ge 2$ we will denote the entries in this $(r-1)\times (u-r)$ table of partitions $\mathcal D^{-1}(Q)$ by $P^Q_{k,\ello}$ where $1\le k\le r-1$ and $1\le \ello\le u-r$. The Burge code of $P\in \mathcal D^{-1}(Q)$ is a word in the letters $\alpha,\beta$ ending in $\alpha$ and having here two adjacent pairs $(\beta\cdot\alpha)$ see \cite[Section 2]{BIK},\cite[Section 2.2]{IKM}, or \cite{Bur}. The Burge code for $P^Q_{k,\ello}$ is
\begin{equation}\label{Burgespecialeq}
    \alpha^{u-r-\ello}\beta^{\ello}\alpha^{r-k}\beta^k\alpha.
\end{equation}
We will show the equality of the partition $P_{k,\ello}(Q)$ in $\mathcal T(Q)$ (from \cite{IKVZ2}) and $P^Q_{k,\ello}$ in $\mathcal D^{-1}(Q)$ (Burge code from \cite{BIK}) by showing that the equations defining their loci in $\mathcal N(Q)$ are the same (Corollary \ref{identifycor}).

\subsection{The matrices in the nilpotent commutator of $Q=(u,u-r)$.}
When $Q = (u,u-r)$ with $u>r\ge 2$, we have that the matrices in $\mathcal N_{J_Q}$ can be written as 
\begin{equation}\label{B-matrix}
B = \begin{bmatrix}
    0&a_1&a_2&a_3&\cdots&a_r&a_{r+1}&\cdots&a_{u-1}&g_0&g_1&g_2&\dots&g_{u-r-1}\\
    0&0&a_1&a_2&\cdots&a_{r-1}&a_{r}&\cdots &a_{u-2}&0&g_0&g_1&\dots&g_{u-r-2}\\
    \vdots&\vdots&\vdots&\vdots&\ddots&\vdots&\vdots&\ddots&\vdots&\vdots&\vdots&\vdots&\ddots&\vdots \\
    0&0&0&0&\cdots&a_{2r-u+1}&a_{2r-u+2}&\cdot&a_{r}&0&0&0&\cdots&g_0\\
    \vdots&\vdots&\vdots&\vdots&\ddots&\vdots&\vdots&\vdots&\vdots&\ddots&\vdots \\
    0&0&0&0&\cdots&0&0&\cdots&a_1&0&0&0&\dots&0\\
    0&0&0&0&\cdots&0&0&\cdots&0&0&0&0&\dots&0\\
    0&0&0&0&\cdots&h_0&h_1&\cdots&h_{u-r-1}&0&b_1&b_2&\dots&b_{u-r-1}\\
    0&0&0&0&\cdots&0&h_0&\cdots&h_{u-r-2}&0&0&b_1&\dots&b_{u-r-2}\\
    \vdots&\vdots&\vdots&\vdots&\ddots&\vdots&\vdots&\ddots&\vdots&\vdots&\vdots&\vdots&\ddots&\vdots \\
    0&0&0&0&\cdots&0&0&\cdot&h_{0}&0&0&0&\cdots&0\\
\end{bmatrix}
\end{equation}
Using this we can get an affine coordinate ring of $\mathcal N_{J_Q}$ as \[\kk[\mathcal N_{J_Q}] = \kk[a_1,a_2,\dots,a_{u-1},b_1,b_2,\dots,b_{u-r-1},g_0,g_1,\dots,g_{u-r-1},h_0,h_1,\dots,h_{u-r-1}].\]
\par
We now recall the definition of the auxiliary matrix $M_B$ from \cite{BIK}.
For a matrix $B$ in $\mathcal N_{J_Q}$ we can associate to $B$, as in (\ref{B-matrix}), a matrix 
\begin{equation}\label{MBeqn}
M_B = \begin{bmatrix}
    a&g\\
    ht^r&b
\end{bmatrix} = \begin{bmatrix}
    a_1t+a_2t^2+\cdots+a_{u-1}t^{u-1}&g_0+g_1t+\cdots+g_{u-r-1}t^{u-r-1}\\
    h_0t^r+a_1t^{r+1}+\cdots+h_{u-r-1}t^{u-1}&g_1t+b_2t^2+\cdots+b_{u-r-1}t^{u-r-1}\\
\end{bmatrix}
\end{equation}
where $a,ht^r\in \kk[t]/(t^u)$ and $g,b\in \kk[t]/(t^{u-r})$. This matrix represents a homomorphism in \[
\Hom_{\kk[t]}(\kk[t]/(t^u)\oplus \kk[t]/(t^{u-r}),\kk[t]/(t^u)\oplus \kk[t]/(t^{u-r}))\] and the map 
\[
    \mathcal N_{J_Q} \longrightarrow \mathcal R_Q:=\Hom_{\kk[t]}(\kk[t]/(t^u)\oplus \kk[t]/(t^{u-r}),\kk[t]/(t^u)\oplus \kk[t]/(t^{u-r}))
\]
given by $B\mapsto M_B$ is a ring isomorphism.
We will use the power series ring $\kt$ for calculations and then restrict to the quotients by $(t^u)$ and $(t^{u-r})$.\par

In the endomorphism ring $\Hom_{\kt}(\kt^2,\kt^2)$, there is a subring given by 
\begin{equation}\label{eq:Hr}
    \mathcal H_r = \left\{\begin{bmatrix}
    a&g\\
    ht^r&b
\end{bmatrix}\colon a,b,g,h\in \kt\right\}
\end{equation}
and the natural map $\mathcal H_r \longrightarrow\mathcal R_Q$ is a surjective ring homomorphism. 
\begin{definition}\label{ordermatrixdef} The \emph{order matrix} of $B$
is  
\[T=  \begin{bmatrix}
	k&&0\\r&&\ello'
	 \end{bmatrix}.\]
where $k$ is the order of $a$, $\ello$ is the order of $b$ in $M_B$ and $\ello'=\ello\oplus (r-k)$.
\end{definition}

\subsection{The equations $E_{k,\ello}$}

\begin{definition}\label{eqndef}
    For $Q = (u,u-r)$, where $u>r\ge 2$, and $1\le k\le r-1$ and $1\le \ello \le u-r$ let 
    \[
    E^Q_{k,\ello} = \{a_1,a_2,\dots,a_{k-1},b_1,b_2,\dots b_{\ello-1}\} \subseteq \kk[\mathcal N_{J_Q}]
    \]
    when $k+\ello\le r$; and let 
   \[\begin{split}
        E^Q_{k,\ello} &= \{a_1,a_2,\dots,a_{k-1},b_1,b_2,\dots b_{r-k-1},a_kb_{r-k}-g_0h_0, \\&a_kb_{r+1-k}+a_{k+1}b_{r-k}-g_0h_1-g_1h_0,\dots,\sum_{j=0}^{k+\ello-r-1}(a_{k+j}b_{r-k-j}-g_jh_{k+\ello-r-1-j})\}\subseteq \kk[\mathcal N_{J_Q}]
   \end{split}
    \]
    if $k+\ello\ge r+1$.    Let $I^Q_{k,\ello} = \left(E^Q_{k,\ello}\right)$ be the ideal generated by $E^Q_{k,\ello}$ in $\kk[\mathcal N_{J_Q}]$ and let $X^Q_{k,\ello} = V(I^Q_{k,\ello})$ be the variety defined by it.
\end{definition}

\section{Equation to partition}
We show that the equations $E^Q_{k,\ello}$ holding for the entries of a matrix $A\in\mathcal N(Q)$, and that $A$ is general enough, determine that $P_A=P_{k,\ello}(Q)$ in the table $\mathcal T(Q)$. This in effect clarifies and shows one direction of \cite[Conjecture~4.19]{IKVZ1}, made in 2015, that the locus of partitions $P_{k,\ello}(Q)$ in $\mathcal N(Q)$ is given by the equations $E^Q_{k,\ello}.$  
\begin{theorem}\label{equTOpar}
Let $A$ be a matrix in the nilpotent commutator of the Jordan matrix $J_Q, Q=(u,u-r), r\ge 2$. Let $(k,\ello)$ satisfy $1\leq k\leq r-1$ and $1\leq \ello\leq u-r$. Assume that the equations $E^Q_{k,\ello}$ hold for entries of $A$, and that the rest of the entries of $A$ are generic. Then the Jordan type of $A$ is equal to the partition $P_{k,\ello}(Q)$ in the table $\mathcal{T}(Q), Q=(u,u-r)$ of Theorem \ref{tablethm}.

\end{theorem}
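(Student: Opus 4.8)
The plan is to compute directly the Jordan type $P_A$ of a generic matrix $A$ satisfying the equations $E^Q_{k,\ello}$, working through the ring isomorphism $\mathcal N_{J_Q}\xrightarrow{\sim}\mathcal R_Q$, $A\mapsto M_A$, of \eqref{MBeqn}, and then to identify the resulting partition with one of the three families of Theorem~\ref{tablethm}. Since $P_A$ is determined by the coranks $\corank_\kk(A^n)=\dim_\kk\ker(A^n)$ for $n\ge 1$, and $A^n$ corresponds to $M_A^n$ acting on the length-$(2u-r)$ module $V:=\kk[t]/(t^u)\oplus\kk[t]/(t^{u-r})$, the whole problem is to compute $\corank_\kk(M_A^n|_V)$ for all $n$ from the data carried by $M_A$.

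The first step is to translate $E^Q_{k,\ello}$ into order conditions on $M_A$. In both cases of Definition~\ref{eqndef} the vanishing of $a_1,\dots,a_{k-1}$, together with the genericity of the remaining coordinates, gives $\ord(a)=k$; the vanishing of the listed $b$-variables gives $\ord(b)=\min(\ello,r-k)$, which equals $\ello$ when $k+\ello\le r$ and equals $r-k$ when $k+\ello\ge r+1$; and $g,h$ are generic, hence units. Moreover, once $\ord(a)\ge k$ and $\ord(b)\ge r-k$ are known, the ``product'' generators of $E^Q_{k,\ello}$ (present only when $k+\ello\ge r+1$) are exactly the coefficients of $t^r,t^{r+1},\dots,t^{k+\ello-1}$ in $\det M_A=ab-ght^r$; so, with genericity, they express precisely that $\ord(\det M_A)=k+\ello$, and this equality holds automatically when $k+\ello\le r$. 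Thus, under the hypotheses, $\ord(\operatorname{tr}M_A)=\min(k,\ello,r-k)$ and $\ord(\det M_A)=k+\ello$ for generic $A$.

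The second step is the core computation. By the Cayley--Hamilton relation in $\mathcal H_r$, $M_A^{\,n}=p_nM_A-q_nI$ where $p_n,q_n\in\kt$ are the ``Chebyshev-type'' polynomials in $\tau:=\operatorname{tr}M_A$ and $\delta:=\det M_A$, with $q_n=\delta\,p_{n-1}$; their $t$-adic orders, and hence the orders of all four entries of $M_A^{\,n}$, are determined by $\ord\tau$ and $\ord\delta$ alone. Equivalently, over $\kk$ (which we may take algebraically closed) one factors $x^2-\tau x+\delta=(x-\rho_1)(x-\rho_2)$ in $\kt[x]$ by the Newton-polygon/Hensel argument applied to the vertices $(0,k+\ello)$, $(1,\ord\tau)$, $(2,0)$; for generic coordinates $\rho_1\ne\rho_2$, with $\ord\rho_2=\min(k,\ello,r-k)$ and $\ord\rho_1=k+\ello-\ord\rho_2$. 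One then computes $\corank_\kk(M_A^{\,n}|_V)$ for each $n$. The feature to watch is that only elements of $\mathcal H_r$ (equivalently of $\mathcal R_Q$) act on $V$, so although $M_A$ becomes $\operatorname{diag}(\rho_1,\rho_2)$ after conjugation by some element of $GL_2(\kt)$, one cannot in general diagonalise it inside $\mathcal H_r$: diagonalising over $GL_2(\kt)$ replaces the lattice $t^u\kt\oplus t^{u-r}\kt$ defining $V$ by a ``tilted'' sublattice, and one must find the Jordan type of $\operatorname{diag}(\rho_1,\rho_2)$ on the corresponding quotient. Using that multiplication by a unit times $t^d$ on $\kk[t]/(t^m)$ has Jordan type $[m]^d$, the almost-rectangular partition of $m$ into $d$ parts, in the ``untilted'' regions the answer is a union of two almost-rectangular partitions of total size $2u-r$, while the ``tilted'' regime produces the extra rectangular block characteristic of type~C.

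The last step is the comparison with Theorem~\ref{tablethm}: for each $(k,\ello)$ with $1\le k\le r-1$ and $1\le\ello\le u-r$, one locates $(k,\ello)$ in the unique region $A_t$, $B_t$ or $C_t$ --- via the auxiliary quantities $k_t,c_t,d_t,q_t$ and $t_{\max}$ --- and checks that the partition obtained from the coranks equals $\left([u]^k,[u-r]^\ello\right)$ on $A_t$, $\left([u-r+2t]^t,[u-2t]^{k+\ello-t}\right)$ on $B_t$, and $\left([u-r+2t]^t,[u-2t-d_t(q_t-1)]^{k-d_t},(q_t-1)^{d_t}\right)$ on $C_t$. The main obstacle will be precisely this corank bookkeeping together with the case analysis: tracking how the constrained change of basis moves the two truncation lengths away from $u$ and $u-r$; handling the degenerate cases (when $k=\ello$, so that the Newton polygon is a single segment; when $2k=r$; and on the region boundaries $k=c_t$, $\ello=t$, and the exceptional set $A_T$, nonempty by \eqref{ATnonempeq} only when $r$ is even and $u\ge\frac{3r}{2}$); and, hardest of all, reproducing the three distinct part sizes of the type~C partitions, which is where this lattice tilting has its most intricate effect.
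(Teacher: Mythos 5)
Your plan diverges genuinely from the paper's. The paper does not pass through Cayley--Hamilton, the characteristic polynomial, or any factorization/lattice argument at all. Instead it invokes the tropical order-matrix machinery from \cite{BIK}: by \cite[Lemmas 3.5, 3.6]{BIK} one has, for $s\ge 2$, a closed-form tropical expression
\[
\corank A^s = s(k+\ello)\oplus(sk+u-r)\oplus((s-2)\ello'+u)\oplus(u+u-r),
\]
where $\ello'=\ello\oplus(r-k)$ and $\oplus$ denotes minimum. It then treats the four summands as affine lines $L_1,\dots,L_4$ in the $sy$-plane, asks which line is the lower envelope for each $s$, and splits into three cases according to the path $L_1\to L_2\to L_4$, $L_1\to L_3\to L_4$, or $L_1\to L_2\to L_3\to L_4$. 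In each case the conjugate partition $P^\vee$ is read off from the piecewise-linear staircase, the floor functions $s_1,e_1,\dots$ are introduced, and the resulting $P$ is identified with the A, B, or C entry of $\mathcal{T}(Q)$ by bounding $|U_{\text{top}}|$, $|U_{\text{middle}}|$, $|U_{\text{bottom}}|$ against $u$. The type-C path corresponds to $L_2$ and $L_3$ crossing between $x_{1,2}$ and $x_{3,4}$, and the paper shows this forces $\ello=\ello'<k\le r-k$ and $\lfloor x_{2,3}\rfloor=s_1+1$, so that exactly one intermediate $s$ falls on $L_2$.

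The substantive gap in your proposal is that you have not carried out, and explicitly defer, the very computation on which everything hinges: obtaining $\corank_\kk(M_A^n|_V)$ from the diagonal form $\operatorname{diag}(\rho_1,\rho_2)$ when the conjugating matrix lies in $GL_2(\kt)$ but not in $\mathcal H_r$. You correctly identify that the lattice $t^u\kt\oplus t^{u-r}\kt$ is ``tilted'' by the base change and assert that this tilting produces the extra block characteristic of type C, but you give neither a formula nor an argument for the corank of $\operatorname{diag}(\rho_1^n,\rho_2^n)$ on the tilted quotient. That is precisely what must be proved; the paper obtains it in a different way, as the bounded-size formula above. A second, lesser issue: your claim that the orders of the four entries of $M_A^n$ are ``determined by $\ord\tau$ and $\ord\delta$ alone'' is not accurate --- from $M_A^n=p_nM_A-q_nI$ one sees they also depend on $\ord a$, $\ord b$ and the off-diagonal orders (which you do have, so this is an imprecision rather than an error). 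Finally, your last paragraph promises to match the output against the $A_t,B_t,C_t$ regions using $k_t,c_t,d_t,q_t$, but these thresholds are never computed or related to your Newton-polygon data; the paper instead compares the computed $P$ directly against the stated shapes $([u]^k,[u-r]^\ello)$, $([u-r+2t]^t,[u-2t]^{k+\ello-t})$, etc., without needing to locate $(k,\ello)$ in a region first. In short: plausible alternative strategy, but the central lattice calculation and the final identification are both left open, so this is an outline, not a proof.
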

\begin{proof}
The order matrix of $A$ is 
\[T=  \begin{bmatrix}
	k&&0\\r&&\ello'
	 \end{bmatrix}.\]
where $\ello'=\ello\oplus (r-k)$. The equations $E^Q_{k,\ello}$ also imply that $\corank A=k+\ello$.

By \cite[Lemma 3.6]{BIK} for $s\geq 1$, 
\[
\corank A^s=(k+\ello)s\oplus ((T^{\otimes s})_{11}+u-r)\oplus(u+u-r).
\]

Moreover, by \cite[Lemma 3.5]{BIK} for $s\geq 2$, 
\[
(T^{\otimes s})_{11}=\left\{
	\begin{array}{ll}
	sk\oplus\frac{s}{2}r\oplus((s-2)\ello'+r)\oplus u&\mbox{ if } s \mbox{ is even},\\ \\
	sk\oplus(k+\frac{s-1}{2}r)\oplus(\ello'+\frac{s-1}{2}r)\oplus((s-2)\ello'+r)\oplus u&\mbox{ if } s \mbox{ is odd}.
	\end{array}
		\right.
\]

Since $k+\ello'\leq r$, if $k\leq \ello'$ then $2k\leq r$, and therefore
\[
\begin{array}{ll}
2ks&\leq rs\\
\mbox{ and}\\
2ks&=2k+2(s-1)k\\
&\leq 2k+(s-1)r\\ 
&\leq 2\ello'+(s-1)r.
\end{array}
\]

%Thus in this case, $ks \leq \frac{r}{2}s$ and $ks\leq \oplus(k+\frac{s-1}{2}r)\oplus(\ello+\frac{s-1}{2}r)$.

On the other hand, if $\ello'<k$, then $2\ello'<r$ and we have

\[
\begin{array}{ll}
2((s-2)\ello'+r)&< sr\\
\mbox{ and}\\
2((s-2)\ello'+r)&=2\ello'+2(s-3)\ello'+2r\\
&< 2\ello'+(s-1)r \\
&<2k+(s-1)r.
\end{array}
\]
%Thus in this case, $(s-2)\ello+r \leq \frac{r}{2}s$ and $(s-2)\ello+r\leq \oplus(k+\frac{s-1}{2}r)\oplus(\ello+\frac{s-1}{2}r)$.
Thus for $s\geq 2$ we always have $(T^{\otimes s})_{11}=sk\oplus((s-2)\ello'+r)\oplus u.$ 

Consequently, 
\[\begin{array}{ll}
\corank A&=k+\ello,\\ 
\corank A^s&=s(k+\ello)\oplus (sk+u-r)\oplus((s-2)\ell'+u)\oplus(u+u-r), \mbox{ for }s\geq 2.
\end{array}\]
\bigskip

In the $sy$-plane, we consider the following four lines corresponding to the components of $\corank A^s$.
\[\begin{array}{ll}
  L_1(s)&=(k+\ello)s\\
  L_2(s)&=ks+u-r\\
  L_3(s)&=\ell's+u-2\ell'\\
  L_4(s)&=u+u-r\\
\end{array}
\]

For $s=1$ the corank of $A^s$ is obtained by $L_1$ and for $s$ large enough, it stabilizes at $L_4$. For $s\geq 2$ the corank of $A^s$ is determined by the line that lies below the other three at that particular value of $s$. For integers, $i$ and $j$ with $0\leq i<j\leq 3$, we denote the $s$-coordinate of the intersection of $L_i$ and $L_j$ by $x_{ij}$. In particular, we have 

\[\begin{array}{llll}
x_{1,2}&=\dfrac{u-r}{\ell},&
x_{1,3}&=\dfrac{u-2\ell'}{k+\ell-\ell'},\\\\
x_{2,3}&=\dfrac{r-2\ell'}{k-\ell'}, &
x_{2,4}&=\dfrac{u}{k},\\\\
x_{3,4}&=\dfrac{u-r}{\ell'}+2.
\end{array}\]

We first study the Jordan partition of a matrix $A$ based on the ``path" through which the corank of powers of $A$ are obtained. 
\bigskip

\noindent\underline{\bf Case A. ($L_1 \to L_2 \to L_4$)} Suppose that $L_3(s)\geq L_!(s)\oplus L_2(s)\oplus L_4(s)$, for $s\geq 2$. Then 
\begin{equation}\label{smallkequ}
\begin{array}{ll}
\corank A^s&=s(k+\ello)\oplus (sk+u-r)\oplus(u+u-r)\\
           &=L_1(s)\oplus L_2(s)\oplus L_4(s).
\end{array}
\end{equation}
Let 
\[\begin{array}{lcl}
\begin{array}{ll}
s_1&=\lfloor x_{1,2}\rfloor=\lfloor\frac{u-r}{\ello}\rfloor,\\
e_1&=u-r-s_1\ello.
\end{array}
&\mbox{and}&
\begin{array}{lll}
s_2&=\lfloor x_{2,4}\rfloor=\lfloor\frac{u}{k}\rfloor,\\
e_2&=u-s_2k,
\end{array}
\end{array}\]

Then by equation~\ref{smallkequ} we have
\[
\corank A^s=\left\{
	\begin{array}{ll}
	s(k+\ello),& \mbox{ if }s=1, \dots, s_1\\
	sk+u-r,&\mbox{ if }s= s_1+1, \dots, s_2\\
	u+u-r,&\mbox{ if }s\geq  s_2+1.
	\end{array}
	\right.\]

We note that by assumption, for all $s$, we have $\corank A^s\leq L_3(s)$. In particular, $u+u-r\leq L_3(x_{2,4})$. Equivalently the inequality $\frac{u-r}{\ello'}+2\leq \frac{u}{k}$ holds. Since by definition $\ello'\leq \ello$, we also have
$\frac{u-r}{\ello}+2\leq \frac{u}{k}$. Thus $s_1+2 \leq s_2$. See figure~\ref{caseAfig} for a visualization of the linear terms affecting $\corank A^s$ in this case.

\begin{figure}

\boxed{\includegraphics[scale=.3]{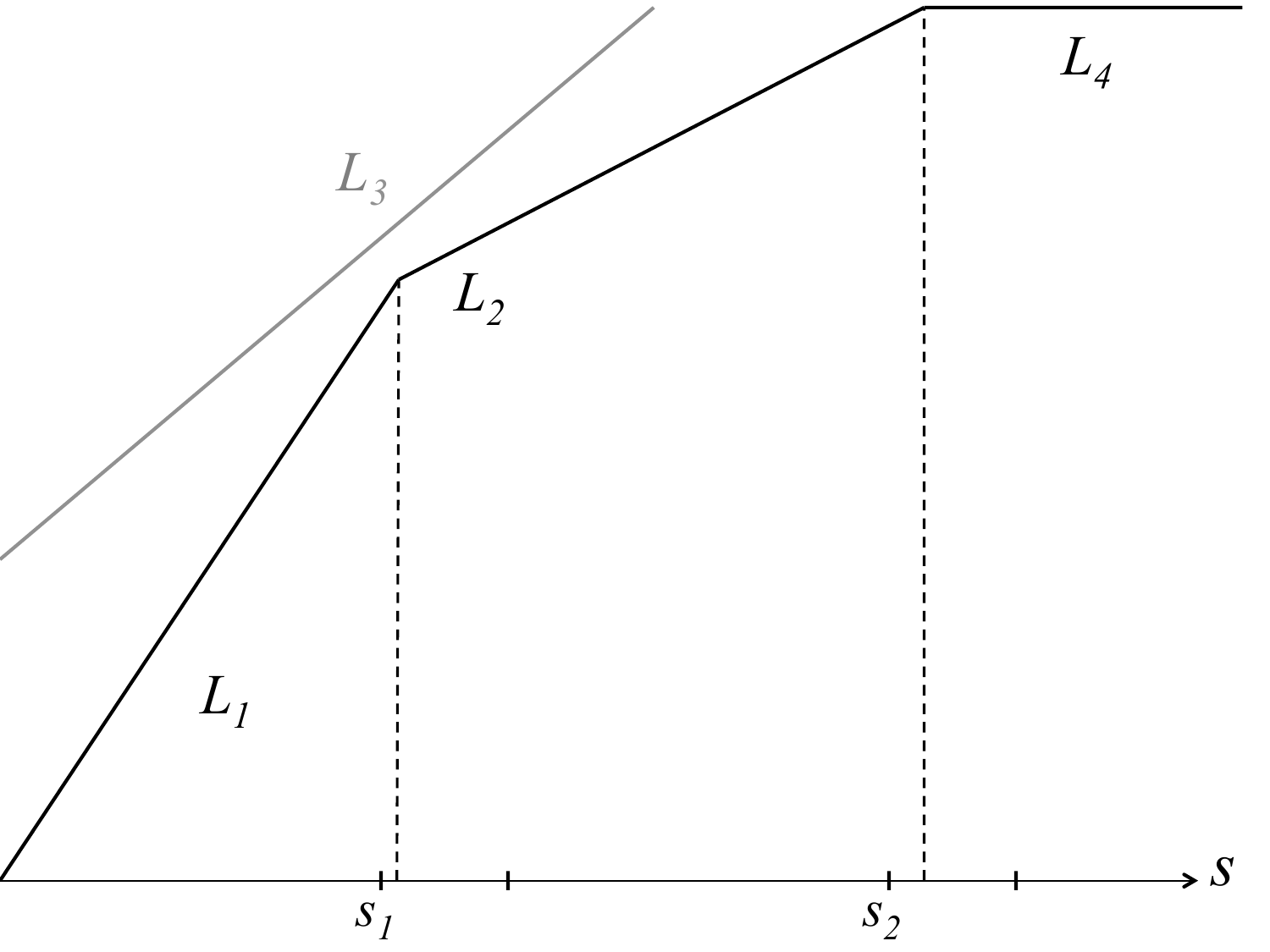}}

\caption{Comparing terms in the expression for $\corank A^s$ in Case A of the proof of Theorem~\ref{equTOpar}.}\label{caseAfig}
\end{figure}

Let $P$ be the Jordan type of matrix $A$. Then 
$$P^{\vee}=\Big(\, (k+\ello)^{s_1}, \, k+e_1 , \,
k^{s_2-s_1-1},  \,  e_2\Big),$$
and therefore
\[
  P=\left\{\begin{array}{ll}
             \Big(\, (s_2+1)^{e_2},\, s_2^{k-e_2}, \, (s_1+1)^{e_1}, \, s_1^{\ello-e_1}\, \Big),& \mbox{if }e_1,e_2>0\\\\

             \Big(\, (s_2+1)^{e_2},\, s_2^{k-e_2}, \, s_1^{\ello}\, \Big),& \mbox{if }e_2>0 \mbox{ and }e_1=0\\ \\

            \Big(\, s_2^{k}, \, (s_1+1)^{e_1}, \, s_1^{\ello-e_1}\, \Big),& \mbox{if }e_2=0 \mbox{ and }e_1>0\\ \\

             \Big(\, s_2^{k},  \, s_1^{\ello}\, \Big),& \mbox{if } e_1=e_2=0.

           \end{array}\right.
       \]

In all cases above, we can write $P=([u]^k,[u-r]^\ello)$. Looking at $U$-chains in $P$, we have
\[\begin{array}{ll}
    |U_{top}| &=u  \\
    |U_{bottom}|&=u-r+2k. 
\end{array}
\]

%Since $\corank A^{s_2}=L_2(s_2)$, we have $L_2(s_2)\leq L_3(s_2)$. On the other hand, since $k+\ello'\leq r$ and $\ello'\leq \ello$, we have
%\[
%\begin{array}{ll}
%L_2(s_2)&=s_2k+u-r\\
%       &=(s_2-2)k+u-r+2k\\
%       &=(s_2-2)\ello'+u+(s_2-2)(k-\ello')-r+2k\\
%       &=L_3(s_2)+s_2(k-\ello')+2\ello'-r+2k
%\end{array}
%\]

We claim that $u-r+2k\leq u$. In fact, if $u-r+2k> u$ then $2k>r$. By definition we also know that $r\geq k+\ello'$. Thus the inequality $2k>r$ implies that $k>\ello'$. However, if this is the case, then for all $s\geq 2$, we have
\[\begin{array}{ll}
L_2(s)&=sk+u-r\\
      &=(s-2)k+u-r+2k\\
      &>(s-2)k+u\\
      &\geq (s-2)\ello'+u\\
      &=L_3(s).
\end{array}\]
This contradicts the fact that for $s_1<s\leq s_2$, we have $\corank A^{s}=L_2(s)$, and therefore $L_2(s)\leq L_3(s)$.

Thus $|U_{bottom}|=u-r+2k\leq u$.

Finally, $P$ contains a middle $U$-chain only if $e_1$ and $e_2$ are both positive and $s_2=s_1+2$. In this case 
\[
\begin{array}{ll}
    |U_{middle}|&=s_2(k-e_2+e_1)-e_1+2e_2\\ \\ 
                &=s_2k-s_2(e_2-e_1)-e_1+2e_2\\ \\ 
                &=s_2k+e_2-(s_2-1)(e_2-e_1)\\ \\ 
                &=u-(s_2-1)(e_2-e_1).
\end{array}
\]
Since $s_2=s_1+2$, we have
\[
\begin{array}{ll}
         e_2-e_1&=(u-s_2k)-(u-r-s_1\ello)\\
         &=(s_2-2)\ello-s_2k+r\\
         &\geq (s_2-2)\ello'-s_2k+r\\
         &=L_3(s_2)-L_2(s_2).
\end{array}
\]
Since $\corank A^{s_2}=L_2(s_2)$, we have $L_3(s_2)\geq L_2(s_2)$. This in particular implies that $e_2-e_1\geq 0$ and therefor $|U_{middle}|\leq u$.  

This shows that in this case $P=([u]^k,[u-r]^\ello)$ with $Q(P)=(u,u-r)$. Thus, in this case $P=P_{k,\ello}$ in the table $\mathcal T(Q), \, Q=(u,u-r)$, as desired.

\bigskip

\noindent\underline{\bf Case B. ($L_1 \to L_3 \to L_4$)}
Suppose that $L_2(s)\geq L_!(s)\oplus L_2(s)\oplus L_4(s)$, for $s\geq 2$. Then  
\begin{equation}\label{bigkequ}
\begin{array}{ll}
\corank A^s&=s(k+\ello)\oplus (s\ello'+u-2\ello')\oplus(u+u-r)\\
           &=L_1(s)\oplus L_3(s)\oplus L_4(s).
\end{array}
\end{equation}

Let
\[
\begin{array}{ll}
s_3&=\lfloor x_{1,3}\rfloor=\lfloor\frac{u-2\ello'}{k+\ello-\ello'}\rfloor\\
%=\lfloor\frac{u-r+2k-r}{2k-r+\ello}\rfloor,\\
e_3&=(u-2\ello')-s_3(k+\ello-\ello'), \mbox{ and}\\ \\

s_4&=\lfloor x_{3,4}\rfloor-2=\lfloor\frac{u-r}{\ello'}\rfloor\\
%=\lfloor\frac{u-r}{r-k}\rfloor\\
e_4&=u-r-s_4\ello'.

\end{array}\]

By equation~\ref{bigkequ} we have

\[
\corank A^s=
	\left\{
	\begin{array}{ll}
	s(k+\ello),& \mbox{ if }s=1, \dots, s_3\\
	(s-2)\ello'+u,&\mbox{ if }s= s_3+1, \dots, s_4+2\\
	u+u-r,&\mbox{ if }s\geq  s_4+3,
	\end{array}
	\right.
\]

We note that since $\corank A^s\leq L_2(s)$ always holds, we in particular have $u+u-r\leq L_2(x_{3,4})$. Thus $u\leq k\left(\frac{u-r}{\ello'}-2\right)$. Thus $\frac{u}{k}\leq \frac{u-r}{\ello'}-2$. On the other hand, since $\ello\geq \ello'\geq 1$, we also have $\frac{u-2\ello'}{k+\ello-\ello'}\leq \frac{u}{k}$. This in particular shows that $s_3\leq s_4$.  See figure~\ref{caseBfig} for a visualization of the linear terms affecting $\corank A^s$ in this case.

\begin{figure}

\boxed{\includegraphics[scale=.3]{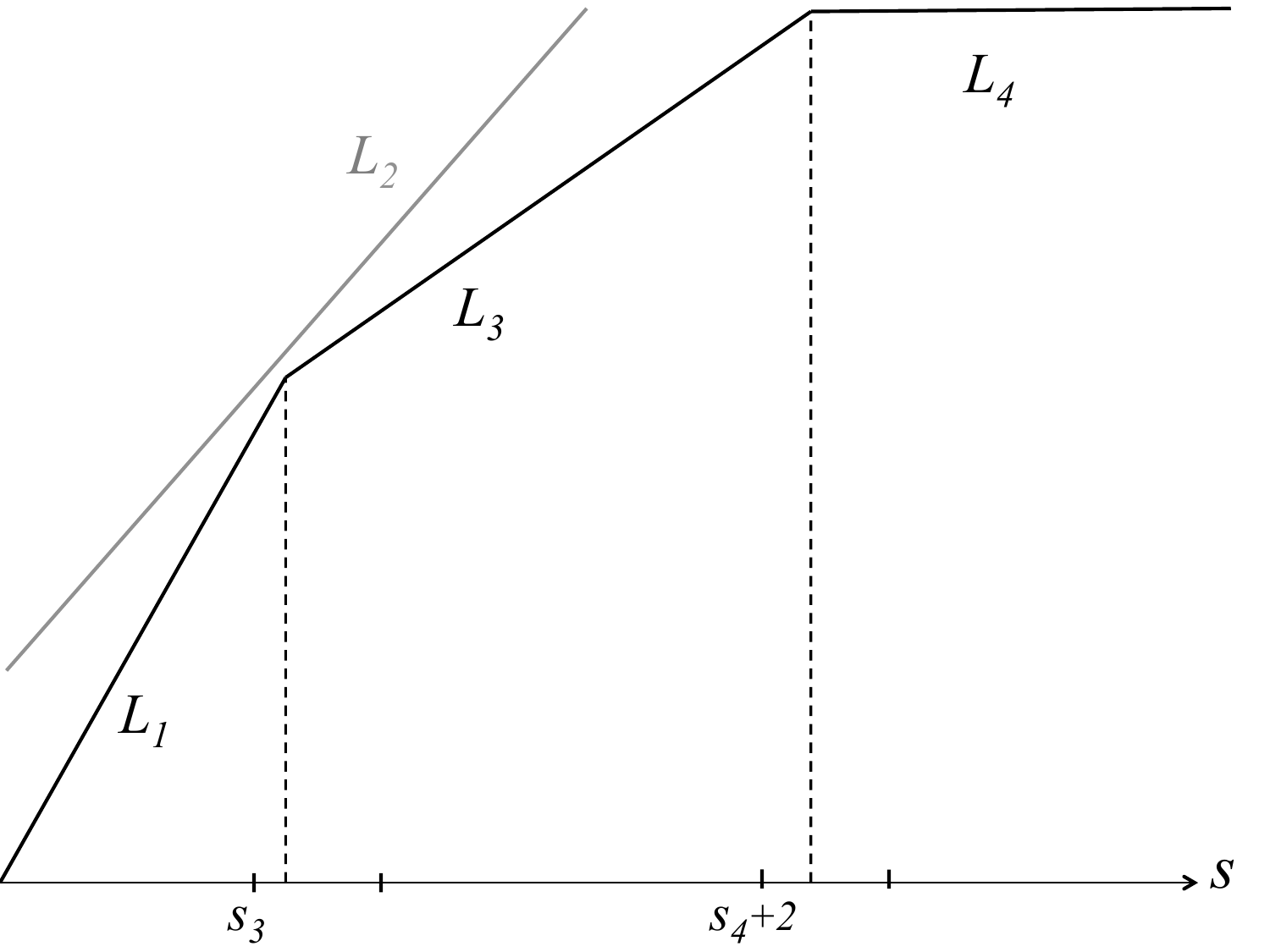}}

\caption{Comparing terms in the expression for $\corank A^s$ in Case B of the proof of Theorem~\ref{equTOpar}.}\label{caseBfig}
\end{figure}

Let $P$ be the Jordan type of $A$. Then

\[P^{\vee}=\Big(\, (k+\ello)^{s_3}, \, \ello'+e_3 , \,
(\ello')^{s_4-s_3+1},  \,  e_4\Big),\]
and

\[P=\left\{\begin{array}{ll}
        \Big(\, (s_4+3)^{e_4},\, (s_4+2)^{\ello'-e_4}, \, (s_3+1)^{e_3}, \, (s_3)^{k+\ello-\ello'-e_3}\, \Big),& \mbox{if }e_3,e_4>0\\\\

        \Big(\, (s_4+3)^{e_4},\, (s_4+2)^{\ello'-e_4}, \, (s_3)^{k+\ello-\ello'}\, \Big),& \mbox{if }e_4>0 \mbox{ and }e_3=0\\ \\

        \Big(\, (s_4+2)^{\ello'}, \, (s_3+1)^{e_3}, \, (s_3)^{k+\ello-\ello'-e_3}\, \Big),& \mbox{if }e_4=0 \mbox{ and }e_3>0\\ \\

        \Big(\, (s_4+2)^{\ello'},  \, (s_3)^{k+\ello-\ello'}\, \Big),& \mbox{if } e_3=e_4=0.
           \end{array}\right.\]
In all cases above, we can write $P=([u-r+2\ello']^{\ello'}, [u-2\ello']^{k+\ello-\ello'}).$ Looking at the $U$-chains in $P$, we have

\[\begin{array}{ll}
    |U_{top}| &=u-r+2\ello'  \\
    |U_{bottom}|&=u. 
\end{array}
\]

Since $\corank A^{s_3+1}=L_3(s_3+1)$, we have $L_3(s_3+1)\leq L_2(s_3+1)$. On the other hand, since $k+\ello'\leq r$, we have 
\[
\begin{array}{lll}
    L_3(s_3+1)&=(s_3+1)\ello'+u-2\ello'\\
              &=s_3\ello'+u-\ello'\\
              &\geq s_3\ello'+u-(r-k)\\
              &=s_3\ello'+k+u-r\\
              &=(s_3+1)k+u-r-s_3(k-\ello')\\
              &=L_2(s_3+1)-s_3(k-\ello').
\end{array}
\]
Thus $k\geq \ello'$ and consequently $r-2\ello'\geq 0$. This shows that $|U_{top}|\leq u$.

Moreover, partition $P$ contains a middle $U$-chain only if $e_3$ and $e_4$ are both positive and $s_4=s_3$. In this case $\corank A^{s_4+1}=L_3(s_4+1)$, and therefore, $L_3(s_4+1)\leq L_2(s_4+1)$. We also have 
\[
\begin{array}{ll}
    L_2(s_4+1)-L_3(s_4+1)&=((s_4+1)k+u-r)-((s_4+1)\ello'+u-2\ello')\\  
                         &=s_4(k-\ello')+k+\ello'-r\\
                         &=s_4(k+\ello-\ello')-s_4\ello+k\ello'-r\\
                         &\leq s_4(k+\ello-\ello')-s_4\ello'+k+\ello-r\\
                         &=(u-2\ello'-e_3)-(u-r-e_4)+k+\ello-r\\
                         &=e_4-e_3+k+\ello-2\ello'
\end{array}
\]
Thus $e_3-e_4\leq k+\ello-2\ello'$, and therefore
\[
\begin{array}{ll}
    |U_{middle}|&=(s_3+1)(\ello'-e_4+e_3)+\ello'-e_4+2e_4\\ 
                &=s_3(\ello'+e_3-e_4)+2\ello'+e_3\\
                &\leq s_3(\ello'+k+\ello-2\ello')+2\ello'+e_3\\
                &=s_3(k+\ello-\ello')+e_3+2\ello'=u.
\end{array}
\]
This shows that in this case $P=([u-r+2\ello']^{\ello'},[u-2\ello']^{k+\ello-\ello'})$ with $Q(P)=(u, u-r)$. Thus in this case $P=P_{k, \ello}$ in the table of $Q=(u,u-r)$, as desired.
\bigskip

\noindent\underline{\bf Case C.}
Suppose that neither Case A nor Case B holds. Then the following two sets are nonempty. 
\[\begin{array}{ll}
\mathcal{L}_2&=\{s\in \mathbb{Z}\, |\, s\geq 2\mbox{ and }L_2(s)<L_1(s)\oplus L_3(s)\oplus L_4(s)\}\\
\mathcal{L}_3&=\{s\in \mathbb{Z}\, |\, s\geq 2\mbox{ and }L_3(s)<L_1(s)\oplus L_2(s)\oplus L_4(s)\}
\end{array}\]

For the two sets above to be both nonempty, we need $L_2$ and $L_3$ must intersect at a point with $s$-coordinate at least two. Thus in this case, $k\neq \ello'$, other wise the lines $L_2$ and $L_3$ are parallel, or identical in the case of $k=\ello'=\frac{r}{2}$. Moreover, the $s$-coordinate of $L_2\cap L_3$ is $x_{2,3}=\frac{r-2\ello'}{k-\ello'}$. Thus we have $\frac{r-2\ello'}{k-\ello'}\geq 2$. If $k<\ello'$ then we get $r\leq 2k$. However this is impossible because $k+\ello'\leq r$. Thus we have $k>\ello'$. Since $\frac{r-2\ello'}{k-\ello'}\geq 2$, we have $r\geq 2k$, which in particular implies that $\ello'<r-k$. Thus since by definition $\ello'=\min\{r-k, \ello\}$, we must have $\ello=\ello'<k\leq r-k$.

Since $\ello<k\leq r-k$, we have $u-2\ello>u-r$. Thus $L_3$ is originally above $L_2$ until they intersect at $x_{2,3}$, after which $L_3$ will be below $L_2$. Therefore, 
\[
\corank A^s=
	\left\{
	\begin{array}{ll}
	s(k+\ello),& \mbox{ if }s=1, \dots, \lfloor x_{1,2}\rfloor\\
	sk+u-r,&\mbox{ if }s= \lfloor x_{1,2}\rfloor+1, \dots, \lfloor x_{2,3}\rfloor\\
	s\ello+u-2\ello,&\mbox{ if }s= \lfloor x_{2,3}\rfloor+1, \dots, \lfloor x_{3,4}\rfloor\\
	u+u-r,&\mbox{ if }s\geq  \lfloor x_{3,4}\rfloor+1,
	\end{array}
	\right.
\]

We also have $ x_{3,4}=\frac{u-r}{\ello}+2=x_{1,2}$. Thus if we let $s_1=\lfloor\frac{u-r}{\ello}\rfloor$, then we have $\lfloor x_{2,3}\rfloor=s_1+1$, and 
\[  \begin{array}{ll}
    \corank A^s      &=s(k+\ello),\mbox{ for }s=1, \dots, s_1\\
	\corank A^{s_1+1}&=(s_1+1)k+u-r,\\
	\corank A^{s_1+2}&=s_1\ello+u,\\
	\corank A^{s}    &=u+u-r, \mbox{ for }s\geq  s_1+2.
	\end{array}
\]
See figure~\ref{caseCfig} for a visualization of the linear terms affecting $\corank A^s$ in this case.

\begin{figure}

\boxed{\includegraphics[scale=.3]{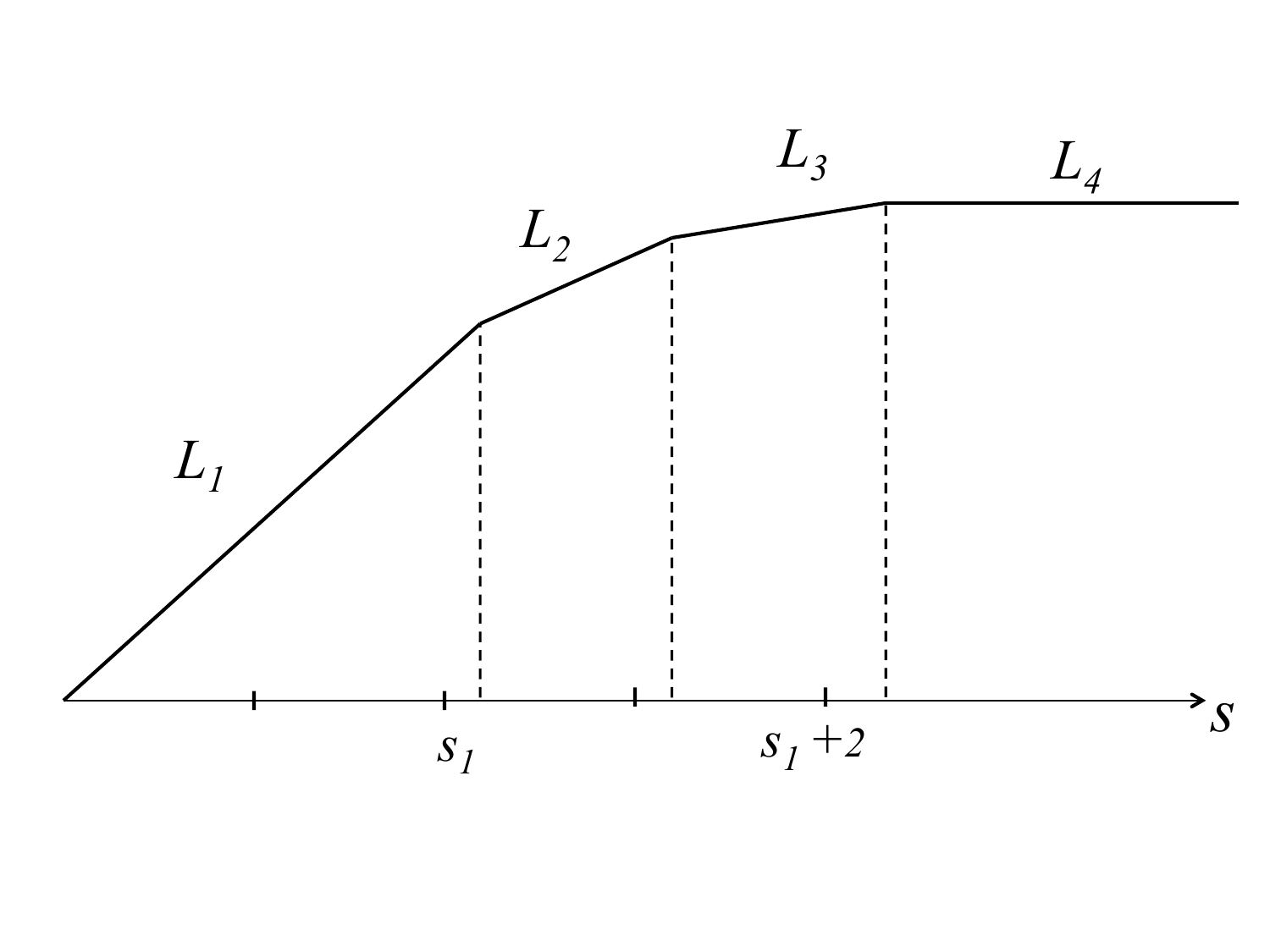}}

\caption{Comparing terms in the expression for $\corank A^s$ in Case C of the proof of Theorem~\ref{equTOpar}.}\label{caseCfig}
\end{figure}

Thus, for the Jordan type $P$ of $A$ we have 
\[P^{\vee}=\Big(\, (k+\ello)^{s_1}, \, k+e 
,  \, \ello+f, \, e \Big),\] where 
\[\begin{array}{ll}
e&=(u-r)-s_1\ello\\
f&=(r-2\ello)-(s_1+1)(k-\ello).
\end{array}\]

We have
\[P=\left\{
\begin{array}{ll}
        \Big(\, (s_1+3)^{e},\, (s_1+2)^{\ello+f-e}, \, (s_1+1)^{k-\ello+e-f}, \, (s_1)^{\ello-e}\, \Big),& \mbox{if }e>0\\\\

        \Big(\, (s_1+2)^{\ello+f},\, (s_1+1)^{k-\ello-f}, \, (s_1)^{\ello}\, \Big),& \mbox{if }e=0.
\end{array}\right.
\]
   
If $e=0$ then $P=([u]^{k},[u-r]^{\ello})$ and 
\[\begin{array}{ll}
    |U_{top}| &= (s_1+1)k+\ello+f\\
                   &=(s_1+1)(k-\ello)+(s_1+2)\ello+f\\
                   &=r-2\ello+u-r+2\ello\\
                   &=u, \\ \\

    |U_{bottom}|&=s_1(k-f)+k-\ello-f+2(\ello+f)\\
                &=(s_1+1)k-s_1f+k+\ello+f\\
                &=u-s_1f\\
                &\leq u.
\end{array}
\]
Thus, in this case $Q(P)=(u,u-r)$ and $P$, which is a type A partition, is the same as $P_{k,\ello}$ in the table $\mathcal T(Q), Q=(u,u-r)$, as desired.   
           
Finally, assume that $e>0$. Then we have
\[\begin{array}{ll}
    |U_{top}| &=(s_1+2)(\ello+f)+e \\
              &=s_1\ello +2\ello +(s_1+2)f+e\\
              &=u-r+2\ello+(s_1+1)f+f\\
              &< u-r+2\ello+(s_1+1)(k-\ello)+f\\
              &=u-r+2\ello+r-2\ello\\
              &=u,\\\\
              
    |U_{middle}| &= (s_1+1)k+\ello+f-e+2e\\
    		        &=(s_1+1)(k-\ell)+(s_1+2)\ello+f+e\\
		        &=r-2\ello+u-r+2\ello\\
		        &=u,\\\\

    |U_{bottom}|&=s_1(k-f)+k-\ello+e-f+2(\ello+f)\\
                &=(s_1+1)k-s_1f+k+\ello+e+f\\
                &=u-s_1f\\
                &\leq u.
\end{array}
\]

Thus, in this case, $P$ is a type C (also type B if $f=0$) partition with $Q(P)=(u,u-r)$. Moreover, we can write

\[\begin{array}{ll}
P&=\Big(\, (s_1+3)^{e},\, (s_1+2)^{\ello-e}, \, (s_1+2)^f, \, (s_1+1)^{k-\ello+e-f}, \, (s_1)^{\ello-e}\, \Big)\\
 &=\Big([u-r+2\ello]^\ello, \, [u-2\ello-s_1(\ello-e)]^{k-\ello+e}, \, (s_1)^{\ello-e}\, \Big).
 \end{array}\]
This shows that $P$ is in fact equal to the partition $P_{k, \ello}$ in the table $\mathcal T(Q),\, Q=(u,u-r)$, as desired.

\end{proof}
\begin{corollary}\label{identifycor} Let $Q=(u,u-r), r\ge 2.$  The partition $P_{k,\ello}(Q)$ in the table $\mathcal T(Q)$ of Theorem \ref{tablethm} is identical with the partition $P^Q_{k,\ello}$ of \cite[Equation 3]{BIK}, whose Burge code is $\alpha^{u-r-\ello}\beta^{\ello}\alpha^{r-k}\beta^k\alpha$ in the table $\mathcal D^{-1}(Q)$.
\end{corollary}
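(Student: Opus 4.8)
The plan is to derive Corollary~\ref{identifycor} directly from Theorem~\ref{equTOpar} together with the results of \cite{BIK} on the equations $E^Q_{k,\ello}$, with no further computation needed. Recall that in \cite{BIK} the ideal $I^Q_{k,\ello}=(E^Q_{k,\ello})$ is shown to cut out the locus $P_B=P^Q_{k,\ello}$ inside $\mathcal N(Q)$, where $P^Q_{k,\ello}$ is the partition in $\mathcal D^{-1}(Q)$ with Burge code $\alpha^{u-r-\ello}\beta^{\ello}\alpha^{r-k}\beta^k\alpha$; equivalently, a sufficiently general point $A$ of the variety $X^Q_{k,\ello}=V(I^Q_{k,\ello})$ has Jordan type $P_A=P^Q_{k,\ello}$. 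First I would observe that such a general $A$ automatically satisfies the equations $E^Q_{k,\ello}$, and that the finitely many nonvanishing and order conditions invoked in the proof of Theorem~\ref{equTOpar} — that $\corank A=k+\ello$, that the order matrix of $A$ is $T$, and that each $\corank A^s$ is computed by the lower envelope of the four lines $L_1,\dots,L_4$ — each define a dense open subset of $X^Q_{k,\ello}$. Hence a sufficiently general $A\in X^Q_{k,\ello}$ satisfies the hypotheses of Theorem~\ref{equTOpar}, so that $P_A=P_{k,\ello}(Q)$, the partition in the table $\mathcal T(Q)$.

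Comparing the two descriptions of $P_A$ then yields $P^Q_{k,\ello}=P_{k,\ello}(Q)$, which is exactly the assertion of the Corollary, and also completes the proof of \cite[Proposition 3.9]{BIK} that was deferred there. As an internal consistency check one notes that both partitions have $k+\ello$ parts (Theorem~\ref{tablethm}(d), resp. \cite{BIK}) and that both partition the integer $n=u+(u-r)$, so that the conclusion is at least numerically consistent.

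The one point that requires care is the matching of the two notions of ``generic.'' One must ensure that the dense open subset of $X^Q_{k,\ello}$ on which $P_A=P^Q_{k,\ello}$ (from \cite{BIK}) meets the dense open subset on which the hypotheses of Theorem~\ref{equTOpar} hold. If $X^Q_{k,\ello}$ is irreducible this is immediate; in general one restricts to the irreducible component of $X^Q_{k,\ello}$ whose general point realizes $P^Q_{k,\ello}$, observing that the conditions of Theorem~\ref{equTOpar} are phrased purely in terms of the coordinates $a_i,b_j,g_j,h_j$ that $E^Q_{k,\ello}$ constrains or leaves free, and so remain dense and open on that component. I expect this bookkeeping — not any new calculation — to be the only real obstacle, since the substantive work has already been done in the proof of Theorem~\ref{equTOpar}.
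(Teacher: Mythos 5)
Your proposal takes essentially the same route as the paper's proof. The paper's argument is exactly that $E^Q_{k,\ello}$ cuts out the locus whose generic Jordan type is $P_{k,\ello}(Q)$ (Theorem~\ref{equTOpar}) and, by \cite{BIK}, also the locus whose generic Jordan type is $P^Q_{k,\ello}$, so the two partitions coincide; your write-up just spells out the genericity bookkeeping that the paper compresses into a single sentence.
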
\label{identcor}
\begin{proof}  The equations $E^Q_{k,\ello}$ defining each locus are identical.
\end{proof}
\begin{corollary}\label{equationcor} Let $Q=(u,u-r), r\ge 2$. The closure of the locus $W_P, P=P_{k,\ello}(Q)$ of elements in $\mathcal N(Q)$ having Jordan type $P$, is defined by the equations $E^Q_{k,\ello}$.
\end{corollary}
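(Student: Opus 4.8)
The plan is to prove the two inclusions $X^Q_{k,\ello}\subseteq\overline{W_P}$ and $\overline{W_P}\subseteq X^Q_{k,\ello}$, where $P=P_{k,\ello}(Q)$ and $X^Q_{k,\ello}=V(E^Q_{k,\ello})$.

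For the inclusion $X^Q_{k,\ello}\subseteq\overline{W_P}$, I would first show that $X^Q_{k,\ello}$ is irreducible, so that ``$A$ generic in $X^Q_{k,\ello}$'' in Theorem~\ref{equTOpar} is unambiguous. If $k+\ello\le r$ this is clear: $I^Q_{k,\ello}$ is generated by coordinate functions and $X^Q_{k,\ello}$ is a linear subspace of codimension $k+\ello-2$. If $k+\ello\ge r+1$, then after imposing the linear generators (which force $\ord a\ge k$, $\ord b\ge r-k$) the remaining $K:=k+\ello-r$ generators are precisely the coefficients of $t^r,\dots,t^{r+K-1}$ in $\det M_B=ab-ght^r$, the $m$-th being of the form $g_0h_m+(\text{terms free of }h_m)$; hence over $\{g_0\ne0\}$ the variety $X^Q_{k,\ello}$ is the graph of a morphism from an affine space (solve in turn for $h_0,\dots,h_{K-1}$), so it is irreducible of codimension $k+\ello-2$ there, and a direct check shows the generators of $I^Q_{k,\ello}$ form a regular sequence (so $X^Q_{k,\ello}$ is a complete intersection of that codimension) while $X^Q_{k,\ello}\cap\{g_0=0\}$ has strictly smaller dimension, so $X^Q_{k,\ello}$ is the closure of that graph and is irreducible. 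Now Theorem~\ref{equTOpar} gives $P_A=P$ for a generic $A\in X^Q_{k,\ello}$, so $W_P\cap X^Q_{k,\ello}$ is dense in $X^Q_{k,\ello}$ and $X^Q_{k,\ello}=\overline{W_P\cap X^Q_{k,\ello}}\subseteq\overline{W_P}$.

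For the reverse inclusion $\overline{W_P}\subseteq X^Q_{k,\ello}$, the cleanest route is to use Corollary~\ref{identifycor}: since $P=P^Q_{k,\ello}$, and since \cite{BIK} (by way of the box theorem of \cite{IKM}) shows that the locus $\{B\in\mathcal N(Q):P_B=P^Q_{k,\ello}\}$ is cut out by the very equations $E^Q_{k,\ello}$, we get $W_P\subseteq X^Q_{k,\ello}$, hence $\overline{W_P}\subseteq X^Q_{k,\ello}$; together with the first inclusion this is the desired equality. A self-contained alternative is a dimension count. Since $Q$ is stable, $\mathcal D(Q)=Q$, so the generic (hence maximal) Jordan type on the irreducible variety $\mathcal N(Q)$ is $Q$; thus $P_B\le Q$ for every $B\in\mathcal N(Q)$, while $Q\le\mathcal D(P_B)$ because $J_Q\in\mathcal N_B$, so $\mathcal D(P_B)=Q$. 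Then Theorem~\ref{tablethm}(d) and the completeness of $\mathcal T(Q)$ exhibit $\mathcal N(Q)$ as a disjoint union of the $(r-1)(u-r)$ locally closed strata $W_{P_{k',\ello'}(Q)}$; granting that each such stratum is irreducible (which the box theorem of \cite{IKM} provides) and that $\operatorname{codim}_{\mathcal N(Q)}W_P$ equals the number of parts of $P$ minus the number of parts of $Q$, i.e.\ $(k+\ello)-2$, which matches $\operatorname{codim}_{\mathcal N(Q)}X^Q_{k,\ello}$, the inclusion $X^Q_{k,\ello}\subseteq\overline{W_P}$ of irreducible varieties of equal dimension is forced to be an equality.

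The step I expect to be the main obstacle is the reverse inclusion in its self-contained form: establishing that an arbitrary $B$ with $P_B=P$ satisfies all of $E^Q_{k,\ello}$ — equivalently that $\ord a\ge k$, $\ord b\ge\min(\ello,r-k)$, and $\ord(\det M_B)\ge k+\ello$ — has to be read off from the corank sequence of $P$ (computed from the explicit type-A, -B, -C shapes of $P_{k,\ello}(Q)$ in Theorem~\ref{tablethm}) via the corank lemmas of \cite{BIK}, and this is delicate precisely because the corank sequence of $B$ a priori does not pin down the order matrix of $B$ nor the order of $\det M_B$. For that reason the proof I would actually write obtains the corollary by combining Corollary~\ref{identifycor} with the Burge-code description of $\overline{W_{P^Q_{k,\ello}}}$ in \cite{BIK}.
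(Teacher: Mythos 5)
Your final route—combining Corollary~\ref{identifycor} with the main result of \cite{BIK} describing $\overline{W_{P^Q_{k,\ello}}}$ via its Burge code—is exactly the paper's one-line proof (``This follows from Corollary \ref{identcor} and the main result Theorem 1.3 of \cite{BIK}''). The preliminary two-inclusion discussion (irreducibility of $X^Q_{k,\ello}$ plus Theorem~\ref{equTOpar} for one inclusion, a dimension count for the other) is sound but superfluous once you cite \cite{BIK}'s Theorem 1.3 in full, since that theorem already gives the equality $\overline{W_{P^Q_{k,\ello}}}=V(E^Q_{k,\ello})$ rather than only one containment.
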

\begin{proof} This follows from Corollary \ref{identcor} and the main result Theorem 1.3 of \cite{BIK}.
\end{proof}
\vskip 0.2cm\par
These results, together with those of \cite{BIK}, identify the loci of $P_{k,\ello}(Q)$ in the table $\mathcal T(Q)$ with the locus defined by $E^Q_{k,\ello}$ - as stated in 
\cite[Conjecture 4.19]{IKVZ1}, see also \cite[Remark 4.13]{IKVZ2}.\vskip 0.2cm\par
 \thanks {\bf Acknowledgment}: 
We thank Bart Van Steirteghem, and Rui Zhao who participated in our early discussions of equations loci, beginning in \cite[\S 4.3]{IKVZ1}; see also \cite[Remark 4.13]{IKVZ2}.\vskip 0.2cm


\begin{thebibliography}{ABDEF}
\bibitem[Ba1]{Ba1}
R. Basili, \emph{On the irreducibility of commuting varieties of nilpotent matrices}, J. Algebra 268 (2003), no. 1, 58--80.
\bibitem[Ba2]{Ba2}
R. Basili, \emph{On the maximal nilpotent orbit that intersects the centralizer of a matrix}, Transform. Groups 27 (2022), no. 1, 1--30. MR 4400714
\bibitem[BIK]{BIK}
M. Boij, A. Iarrobino, and L. Khatami, \emph{Jordan Type stratification of spaces of commuting nilpotent matrices}, preprint (2024).
\bibitem[Bur]{Bur}
W. H. Burge: \emph{A correspondence between partitions related to generalizations of the Rogers-Ramanujan identities}, Discrete Math. 34 (1981), 9-15.
\bibitem[IKVZ1]{IKVZ1}
A. Iarrobino, L. Khatami, B.Van Steirteghem, R. Zhao: \emph{Nilpotent matrices having a given Jordan type as maximum commuting nilpotent orbit}, arXiv:math.RA/1409.2192 v.2 (2015)
\bibitem[IKVZ2]{IKVZ2}
A. Iarrobino, L. Khatami, B.Van Steirteghem, R. Zhao: \emph{Nilpotent matrices having a given Jordan type as maximum commuting nilpotent orbit}, Lin. Alg. Appl. 546 (2018), 210--260.

\bibitem[IKM]{IKM}
J. Irving, T. Ko{\v{s}}ir, and M. Mastnak: \emph{A Proof of the Box Conjecture for Commuting Pairs of Matrices}, arXiv:math.CO/2403.18574
\bibitem[Kh1]{Kh1}
L. Khatami: \emph{The smallest part of the generic partition of the nilpotent commutator of a nilpotent
matrix}, J. Pure Appl. Algebra 218 (2014), no. 8, 1496--1516.

\bibitem[Kh2]{Kh2}
L. Khatami: \emph{Commuting Jordan types: a survey}, to appear in ``Deformation of Artinian algebras and Jordan type'', A.Iarrobino, P. Macias Marques, M. E. Rossi, and J. Vall\'{e}s, editors, Contemp. Mathematics \#805, p. 27-40, (2024).  arXiv:math.AC/2304.08550 (2023).
\bibitem[KO]{KO}
T. Ko{\v{s}}ir and P. Oblak: \emph{On pairs of commuting nilpotent matrices}, Transform. Groups 14 (2009), no. 1, 175--182.
\bibitem[Ob]{Ob}
P. Oblak \emph{On the nilpotent commutator of a nilpotent matrix},
Multilinear Algebra 60 (2012), no. 5, 599--612.

\bibitem[Pan]{Pan}
D. I. Panyushev: \emph{Two results on centralisers of nilpotent elements},
J. Pure and Applied Algebra, 212 no. 4 (2008), 774--779.
\end{thebibliography}
\end{document}